%SequenceRearrangement.tex
%June 5, 2008

\documentclass{amsart}
\usepackage{amsmath,amssymb,amscd}
\bibliographystyle{amsplain}

\newtheorem{theorem}{Theorem}
\newcommand{\bt}{\begin{theorem}}
\newcommand{\et}{\end{theorem}}

\newtheorem{lemma}{Lemma}
\newcommand{\bl}{\begin{lemma}}
\newcommand{\el}{\end{lemma}}

\newtheorem{corollary}{Corollary}
\newcommand{\bc}{\begin{corollary}}
\newcommand{\ec}{\end{corollary}}

\newcommand{\bpf}{\begin{proof}}
\newcommand{\epf}{\end{proof}}
\newcommand{\beq}{\begin{equation}}
\newcommand{\eeq}{\end{equation}}
\newcommand{\benum}{\begin{enumerate}}
\newcommand{\eenum}{\end{enumerate}}

\newcommand{\N}{\ensuremath{ \mathbf N }}

\title[Supersequences and rearrangements]
{Supersequences, rearrangements of sequences, and the spectrum of bases in additive number theory}
\author{Melvyn B. Nathanson}
\address{Department of Mathematics\\ 
Lehman College (CUNY)\\ 
Bronx, New York 10468}
\email{melvyn.nathanson@lehman.cuny.edu}
\thanks{This work was supported in part by the PSC-CUNY Research Award Program.}

\subjclass[2000]{11B05, 11B13, 11B75, 11J25, 11N37, 26D15} 

\keywords{Additive spectrum, additive eigenvalue,  supersequence, sequence rearrangement, 
tauberian theorem, additive basis, sumset, thin basis, additive number theory}

\date{\today}

\begin{document}

\begin{abstract}
The set $ \mathcal{A} = \{a_n \}_{n=1}^{\infty}$  of nonnegative integers is an asymptotic basis 
of order $h$ if every sufficiently large integer can be represented as the sum of $h$ elements of $\mathcal{A}.$
If $a_n \sim \alpha n^h$ for some real number $\alpha > 0,$ 
then $\alpha$ is called an \emph{additive eigenvalue of order $h$}.
The \emph{additive spectrum of order $h$} is the set $\mathcal{N}(h)$ 
consisting of all additive eigenvalues of order $h$.  It is proved that 
there is a positive number $\eta_h \leq 1/h!$ such that $\mathcal{N}(h) = (0, \eta_h)$ 
or $\mathcal{N}(h) = (0, \eta_h].$  The proof uses results about the construction 
of supersequences of sequences with prescribed asymptotic growth, and also 
about the asymptotics of rearrangements of infinite sequences.  
For example, it is proved that there does not exist a strictly increasing sequence 
of integers $B = \{b_n\}_{n=1}^{\infty}$ such that $b_n \sim 2^n$ and $B$ contains 
a subsequence $\{b_{n_k}\}_{k=1}^{\infty}$ such that $b_{n_k} \sim 3^k.$

\end{abstract}

\maketitle

\section{The additive spectrum}
This paper is motivated by the following problem in additive number theory.  
Let  $\mathcal{A}$ be a set of nonnegative integers.
The \emph{counting function} of $\mathcal{A}$ is the function
\[
\mathcal{A}(y,x) = \sum_{\substack{a\in \mathcal{A} \\ y \leq a \leq x}} 1.
\]
In particular, the function 
\[
\mathcal{A}(0,x) = \sum_{\substack{a\in \mathcal{A} \\ 0 \leq a \leq x}} 1 
\]
counts the number of nonnegative elements of the set $\mathcal{A}$ that do not exceed $x$.

Let $h\mathcal{A}$ denote the set of all sums of $h$ not necessarily distinct elements of $\mathcal{A}$.  
The set $\mathcal{A}$ of nonnegative integers is called 
an \emph{asymptotic basis of order $h$} if the sumset $h{\mathcal{A}}$ contains all sufficiently large integers.

Let $x$ be a  real number, and let $[x]$ denote the integer part of $x.$  
The following counting argument shows that 
if $\mathcal{A}$ is an asymptotic basis of order $h$, 
then $\mathcal{A}(x) \gg x^{1/h}$ for all $x \geq 1.$  
If an integer $n \leq x$ is the sum of $h$ nonnegative integers, 
then each summand is at most $x$.  The number of combinations 
with repetitions allowed of $h$ elements of ${ \mathcal{A}} \cap [0,x]$ is 
${\mathcal{A}(0,x)+h-1 \choose h}.$
If $h{\mathcal{A}}$ contains all integers $n \geq n_0,$ then 
\beq   \label{SeqR:GrowthIneq}
x-n_0 < [x]-n_0+1 \leq {\mathcal{A}(0,x)+h-1 \choose h} \leq \frac{ (\mathcal{A} (0,x)+h-1)^h}{h!}
\eeq
and so 
\[
\liminf_{x\rightarrow\infty} \frac{{\mathcal{A}}(0,x)}{x^{1/h}} \geq (h!)^{1/h}.
\]
Let ${\mathcal{A}} = \{ a_n : n=1,2,\ldots\},$ where $a_n < a_{n+1}$ for all $n \geq 1.$  
Then $\mathcal{A}(0,a_n) = n$.  
Replacing $x$ by $a_n$ in inequality~\eqref{SeqR:GrowthIneq}, we obtain
\[
a_n - n_0 < \frac{ (n+h-1)^h}{h!}
\]
and so
\beq  \label{SeqR:MaxEigenvalue}
\limsup_{n\rightarrow\infty} \frac{a_n}{n^h} \leq \frac{1}{h!}.
\eeq

The asymptotic basis 
$\mathcal{A}$ of order $h$ is called \emph{thin} if $\mathcal{A}(0,x) \ll x^{1/h}.$
Equivalently, if  ${\mathcal{A}} = \{ a_n : n=1,2,\ldots\},$ where $a_n < a_{n+1}$ 
for all $n \geq 1,$
then the asymptotic basis $\mathcal{A}$ is thin if and only if there exist 
positive numbers $c_1$ and $c_2$ such that
\[
c_1 n^h \leq a_n \leq c_2 n^h
\]
for all $n$.  
The first examples of thin bases were discovered by Raikov~\cite{raik37} and St\" ohr~\cite{stoh37}, 
and recent constructions are due to Blomer~\cite{blom03}, Hofmeister~\cite{hofm01}, 
and Jia and Nathanson~\cite{jia-nath89}.

Cassels~\cite{cass57} constructed a beautiful 
family of asymptotic bases of order $h$ such that 
\[
a_n \sim \alpha n^h 
\]
that is, $\lim_{n\rightarrow\infty} a_n/ n^h = \alpha.$  
Grekos, Haddad, Helou, and Pikho~\cite{grek-hadd-helo-pihk06} have produced some variations on Cassels' work.
We call the positive real number $\alpha$ 
an \emph{additive eigenvalue of order $h$}, 
and we denote by $\mathcal{N}(h)$ the set of all additive eigenvalues of order $h.$ 
The set $\mathcal{N}(h)$ is called the \emph{additive spectrum of order $h$.}  
We shall prove that if $\alpha$ is an additive eigenvalue of order $h$ 
and if $0 < \beta < \alpha,$ then $\beta$ is also an additive eigenvalue 
of order $h$.  Equivalently, the additive spectrum  $\mathcal{N}(h)$ is an interval 
of the form $(0, \eta_h)$ or $(0, \eta_h],$ where $\eta_h \leq 1/h!$ 
by inequality~\eqref{SeqR:MaxEigenvalue}.
The proof requires some results about the construction of supersequences and 
the asymptotics of sequences and their rearrangements.  
These results are of independent interest.\footnote{Hardy, Littlewood, and P\' olya include
a chapter on rearrangements of finite sequences in their book \emph{Inequalities}~\cite{hard-litt-poly88}, 
but there does not appear to have been much study of the asymptotics of rearrangements
of infinite sequences.}

\section{Asymptotics of sequence rearrangements}
Let $\N = \{1,2,3,\ldots\}$ denote the set of positive integers 
and $\N_0 = \N \cup \{ 0 \}$ the set of nonnegative integers.
Let $A = \{a_n\}_{n=1}^{\infty}$ be a sequence, and let $S(\N)$ denote the group 
of all permutations of the positive integers \N.  
For every $\sigma \in S(\N)$, the \emph{$\sigma$-rearrangement of the sequence $A$} is the sequence 
\[
A_{\sigma} = \{ a_{\sigma(n)} \}_{n=1}^{\infty}.
\]
A \emph{growth function} is a  positive, strictly increasing, continuous, 
and unbounded function $f$  defined for all real numbers $x \geq 1$.  
We write that the sequence $A$ of real numbers is \emph{asymptotic to the growth function $f$}, denoted $A \sim f,$ 
if $a_n\sim f(n)$ as $n\rightarrow \infty,$ that is, if 
$
\lim_{n\rightarrow\infty} a_n/f(n) = 1.
$

Functions $f$ and $g$, defined for all sufficiently large real numbers, are called \emph{asymptotic}, 
denoted $f \sim g,$ if $\lim_{x\rightarrow\infty} f(x)/g(x)=1.$  
A growth function $f$ is called \emph{asymptotically stable} if  $f(x+\Delta) \sim f(x)$ 
for every positive real number $\Delta.$  Polynomials are asymptotically stable.  
If $c>0,$ then the function $e^{c\sqrt{x}}$ is asymptotically stable but the exponential function $e^{cx}$ is not.

\begin{lemma}
An increasing  function $f$ is asymptotically stable if and only if  $f(x+\delta) \sim f(x)$ for some $\delta > 0.$  
\end{lemma}

\begin{proof}
Suppose that $f(x+\delta) \sim f(x)$ for some $\delta > 0.$  Let $\Delta > 0.$  If $0 < \Delta \leq \delta,$ then 
$f(x) \leq f(x+\Delta) \leq f(x+\delta)$ since $f$ is increasing, and the inequality
\[
1 \leq \frac{f(x+\Delta)}{f(x)} \leq \frac{f(x+\delta)}{f(x)}
\]
implies that $f(x+\Delta) \sim f(x).$  If $\Delta >  \delta,$ then there is a positive integer $r$ such that 
\[
r\delta \leq \Delta < (r+1)\delta.
\]
Then 
\[
f(x) \leq f(x+\Delta) \leq f(x+(r+1)\delta)
\]
and
\[
1 \leq \frac{f(x+\Delta)}{f(x)} \leq \frac{f(x+(r+1)\delta)}{f(x)} = \prod_{i=0}^r \frac{f(x+(i+1)\delta)}{f(x + i\delta)}.
\]
It follows that 
\[
1 \leq \lim_{x\rightarrow\infty} \frac{f(x+\Delta)}{f(x)} 
\leq \lim_{x\rightarrow\infty} \prod_{i=0}^r \frac{f( (x+i\delta) + \delta)}{f(x + i\delta)} = 1.
\]
This completes the proof.
\end{proof}

Note that the Lemma applies only to increasing functions.  
For example, if $c>0,$ then the positive function $f(x) = c\sin^2(\pi x)+1$ satisfies $f(x+1)\sim f(x)$ 
but $\limsup_{x\rightarrow\infty} f(x+1/2)/f(x) = c+1$ and $\liminf_{x\rightarrow\infty} f(x+1/2)/f(x) = 1/(c+1).$

Even if a sequence $A$ is asymptotic to a growth function, there can be permutations $\sigma \in S(\N)$ 
for which the rearrangement $A_{\sigma}$ is not asymptotic to a growth function.  
Here is an example.  Let $a_n=n$ for all $n\in \N$ and $f(x) = x$ for all $x \geq 1.$  
The sequence $A = \{a_n\}_{n=1}^{\infty}$ is asymptotic to the growth function $f$.  
We define the permutation $\sigma \in S(\N)$ as follows:
\[
\sigma(n) = 
\begin{cases}
n & \text{if $n \neq 2^k$ for all $k\in \N$}\\
2^{2k} & \text{if $n = 2^{2k-1}$ for some $k\in \N$}\\
2^{2k-1} & \text{if $n = 2^{2k}$ for some $k\in \N$.}
\end{cases}
\]
Since $a_{\sigma(n)} = \sigma(n)$ for all $n\in \N$, we have 
\[
\lim_{\substack{n\rightarrow\infty\\n\neq 2^k}} \frac{a_{\sigma(n)}}{n} = 1
\]
\[
\lim_{k\rightarrow\infty} \frac{a_{\sigma(2^{2k-1})}}{2^{2k-1}} = 2
\]
\[
\lim_{k\rightarrow\infty} \frac{a_{\sigma(2^{2k})}}{2^{2k}} = \frac{1}{2}.
\]
Thus, the $\sigma$-rearrangement $A_{\sigma}$ is not asymptotic to any growth function.  

Let $f$ and $g$ be asymptotically stable growth functions.  We shall prove that, for all permutations $\sigma \in S(\N)$, 
if $A \sim f$ and $A_{\sigma} \sim g,$ then $f\sim g.$

\bt
Let $A = \{a_n\}_{n=1}^{\infty}$ be a sequence of positive integers, 
and let $f$ and $g$ be asymptotically stable growth functions.  
Let $\sigma \in S(\N).$  If $A \sim f$ and $A_{\sigma} \sim g,$ 
then $f(n) \sim g(n)$ as $n\rightarrow \infty.$  
Equivalently, $a_n \sim a_{\sigma(n)}$ as $n\rightarrow \infty.$
\et

\begin{proof}
Since $A \sim f$ and $A_{\sigma} \sim g,$ it follows that for every $\varepsilon$ 
with $0 < \varepsilon < 1$  
there is a positive integer $N_0(\varepsilon)$ such that
\[
\left( \frac{1-\varepsilon}{1+\varepsilon}\right) g(N_0(\varepsilon)) \geq f(1)
\]
and
\[
(1-\varepsilon) f(n) < a_n < (1+\varepsilon) f(n)
\]
\[
(1-\varepsilon) g(n) < a_{\sigma(n)} < (1+\varepsilon) g(n)
\]
for all $n \geq N_0(\varepsilon)$.  
Choose an integer $N \geq N_0(\varepsilon).$  If $n \geq N,$ then
\beq  \label{SeqR:ineqG}
a_{\sigma(n)} > (1-\varepsilon) g(n) \geq (1-\varepsilon) g(N)
\eeq
since the growth function $g$ is increasing.  
Also, $\sigma$ is a permutation, hence, for every positive integer $j,$ 
there is a unique positive integer $i$ such that $\sigma(i)=j.$  
In particular, if $j$ is an integer such that $a_j \leq (1-\varepsilon) g(N)$ 
and if the integer $i$ satisfies $\sigma(i)=j$,  
then inequality~\eqref{SeqR:ineqG} implies that $i \leq N-1.$

If $j \geq N_0(\varepsilon)$ and $(1+\varepsilon) f(j) \leq (1-\varepsilon) g(N),$ 
then $a_j < (1-\varepsilon) g(N).$  Equivalently, if 
\beq  \label{SeqR:ineq-j}
N_0(\varepsilon) \leq j \leq f^{-1}\left( \left(\frac{1-\varepsilon}{1+\varepsilon} \right) g(N) \right)  
\eeq
then $a_j <  (1-\varepsilon) g(N).$  The number of integers $j$ that satisfy inequality~\eqref{SeqR:ineq-j} is 
\[
\left[  f^{-1}\left( \left(\frac{1-\varepsilon}{1+\varepsilon} \right) g(N) \right)  \right]-N(\varepsilon)+1
\]
and each of these $j$ is of the form $\sigma(i)$ for some positive integer $i \leq N-1.$  Therefore, 
\[
f^{-1}\left( \left(\frac{1-\varepsilon}{1+\varepsilon} \right) g(N) \right)  - N(\varepsilon) <
\left[  f^{-1}\left( \left(\frac{1-\varepsilon}{1+\varepsilon} \right) g(N) \right)  \right]-N(\varepsilon)+1  \leq N-1
\]
and
\[
\frac{1-\varepsilon}{1+\varepsilon}  <  \frac{f\left(N + N(\varepsilon) -1 \right)}{g(N)}.
\]
Since $f$ is an asymptotically stable growth function, it follows that
\[
\frac{1-\varepsilon}{1+\varepsilon}  \leq \liminf_{N\rightarrow\infty}  \frac{f\left(N + N(\varepsilon) -1 \right)}{g(N)} =  \liminf_{N\rightarrow\infty}  \frac{f\left(N\right)}{g(N)}.
\]
This inequality holds for all $\varepsilon >0$, and so 
\[
\liminf_{N\rightarrow\infty}  \frac{f\left(N\right)}{g(N)} \geq 1.
\]

Applying the same argument to the sequences $B = A_{\sigma}$ and $B_{\sigma^{-1}} = A$, where $B \sim g$ and $B_{\sigma^{-1}} \sim f,$   we obtain 
\[
\liminf_{N\rightarrow\infty}  \frac{g(N)}{f(N)} \geq 1
\]
or, equivalently,
\[
\limsup_{N\rightarrow\infty}  \frac{f(N)}{g(N)} \leq 1.
\]
It follows that
\[
\liminf_{N\rightarrow\infty}  \frac{f(N)}{g(N)} = 
\limsup_{N\rightarrow\infty}  \frac{f(N)}{g(N)} =1
\]
and so $f(n)\sim g(n)$ as $n \rightarrow \infty.$
This completes the proof.
\end{proof}

\section{A tauberian theorem for sequence rearrangements}

The sequence $A  = \{a_n\}_{n=1}^{\infty}$ is called  \emph{increasing} 
if $a_n \leq a_{n+1}$ for all $n \geq 1.$  
If $A$ is a sequence of positive integers, then there is a permutation $\sigma \in S(\N)$ 
such that the sequence $A_{\sigma}$ is increasing.  
This ``order-inducing'' permutation is unique if and only if the elements of $A$ are pairwise distinct. 
We shall prove that if $f$ is a growth function such that $A \sim f$, 
then also $A_{\sigma} \sim f, $ or, equivalently, $a_n \sim a_{\sigma(n)}$ as $n\rightarrow \infty.$  
This  ``tauberian'' result is useful in additive number theory.

\bt  \label{SeqR:theorem:RearrangeOrder}
Let $A = \{a_n\}_{n=1}^{\infty}$ be a sequence of real numbers, 
and let $f$ be a growth function such that $A \sim f.$  
If $\sigma$ is a permutation such that $a_{\sigma(n)} \leq a_{\sigma(n+1)}$ 
for all positive integers $n$, then the $\sigma$-rearrangement $A_{\sigma}$
satisfies the asymptotic relation $A_{\sigma} \sim f.$
\et

\begin{proof}
Let $0 < \varepsilon < 1.$  Since $a_n\sim f(n),$ there is a number $N_0(\varepsilon)$ such that 
\beq   \label{SeqR:BasicIneq}
(1-\varepsilon) f(n) < a_n < (1+\varepsilon) f(n)
\eeq
for all integers $n \geq N_0(\varepsilon).$  Let
\[
a^{\ast} = \max\left\{ a_k : 1 \leq k < N_0(\varepsilon) \right\}.
\]
Since the growth function $f$ increases monotonically to infinity, there is a number $N_1(\varepsilon) \geq N_0(\varepsilon)$ such that 
\[
f\left( N_1(\varepsilon)\right) > a^{\ast}.
\]
Consider an integer $n \geq N_1(\varepsilon).$ For $1 \leq k < N_0(\varepsilon)$ we have
\[
a_k \leq a^{\ast} < (1+\varepsilon) f\left( N_1(\varepsilon) \right) \leq (1+\varepsilon) f\left( n\right).
\]
For $N_0(\varepsilon) \leq k \leq n$ we have
\[
a_k < (1+\varepsilon) f\left( k\right) \leq (1+\varepsilon) f\left( n\right).
\]
We see that there are at least $n$ terms of the sequence $A$ that are strictly less than $(1+\varepsilon) f\left( n\right).$  Since the rearranged sequence $A_{\sigma} = \{a_{\sigma(n)}\}_{n=1}^{\infty}$ is monotonically increasing, it follows that
\[
a_{\sigma(n)} < (1+\varepsilon) f\left( n\right)
\]
for all $n \geq  N_1(\varepsilon).$

Similarly, if $k \geq n \geq N_1(\varepsilon),$ then
\[
a_k > (1 - \varepsilon)f(k) \geq (1 - \varepsilon)f(n)
\]
and so there are at most $n-1$ terms of the sequence $A$ that are 
less than or equal to $(1-\varepsilon) f\left( n\right).$  
Since the rearranged sequence $A_{\sigma} = \{a_{\sigma(n)}\}_{n=1}^{\infty}$ 
is monotonically increasing, it follows that
\[
a_{\sigma(n)} > (1-\varepsilon) f\left( n\right)
\]
for all $n \geq  N_1(\varepsilon).$
This proves that $a_{\sigma(n)} \sim f(n),$ or, equivalently, $A_{\sigma} \sim f.$
\end{proof}

\section{A supersequence theorem}

The \emph{lower asymptotic density} of a set $ {\mathcal{C}} $ of nonnegative integers is
\[
d_L( \mathcal{C} ) = \liminf_{x\rightarrow\infty} \frac{{ \mathcal{C}}(0,x)}{x}.
\]
The \emph{upper asymptotic density} of ${\mathcal{C}}$ is
\[
d_U( \mathcal{C} ) = \limsup_{x\rightarrow\infty} \frac{{ \mathcal{C}}(0,x)}{x}.
\]
If $d_L( \mathcal{C} ) = d_U( \mathcal{C} ),$ then the set $ \mathcal{C} $ has \emph{asymptotic density} 
\[
d( \mathcal{C} ) = d_L( \mathcal{C} ) =  \lim_{x\rightarrow\infty} \frac{{ \mathcal{C} }(0,x)}{x}.
\]

The sequence $A = \{a_k\}_{k=1}^{\infty}$ is called a \emph{subsequence} of $B = \{b_n\}_{n=1}^{\infty}$ 
if there is a strictly increasing sequence of positive integers  $\{n_k\}_{k=1}^{\infty}$ such that 
\[
a_k = b_{n_k} \qquad\text{for all $k \geq 1.$}
\]
If  $A$ is a subsequence of $B$, then $B$ is also called a \emph{supersequence} of $A$.

\begin{lemma}   \label{SeqR:lemma:smallA}
Let $f$ be a growth function such that $\lim_{x\rightarrow\infty} f(x)/x = \infty.$  
Let $A = \{a_n\}_{n=1}^{\infty}$ be a strictly increasing sequence of integers
such that $A \sim f.$  Then the set $\mathcal{A} = \{a_n : n = 1,2,\ldots \}$ has asymptotic density 0.
\end{lemma}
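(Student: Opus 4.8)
The plan is to bound the counting function $\mathcal{A}(0,x)$ directly. First I would record two elementary consequences of the hypotheses. Since $A$ is a strictly increasing sequence of integers, $a_n \to \infty$, so for every real number $x \geq a_1$ there is a unique positive integer $n = n(x)$ with $a_n \leq x < a_{n+1}$; for such $x$ we have $\mathcal{A}(0,x) = n(x)$, and $n(x) \to \infty$ as $x \to \infty$. Second, writing
\[
\frac{a_n}{n} = \frac{a_n}{f(n)} \cdot \frac{f(n)}{n},
\]
the relation $A \sim f$ gives $a_n/f(n) \to 1$, while the hypothesis $\lim_{x\to\infty} f(x)/x = \infty$ gives $f(n)/n \to \infty$; hence $a_n/n \to \infty$, equivalently $n/a_n \to 0$.

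Next I would combine these observations. For $x \geq a_1$ with $n = n(x)$, the inequality $x \geq a_n$ yields
\[
\frac{\mathcal{A}(0,x)}{x} = \frac{n}{x} \leq \frac{n}{a_n}.
\]
As $x \to \infty$ we have $n = n(x) \to \infty$, so the right-hand side tends to $0$. Therefore
\[
d_U(\mathcal{A}) = \limsup_{x\to\infty} \frac{\mathcal{A}(0,x)}{x} = 0,
\]
and since $d_L(\mathcal{A}) \geq 0$ trivially, the set $\mathcal{A}$ has asymptotic density $d(\mathcal{A}) = 0$.

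I do not expect a genuine obstacle here: once the two preliminary facts are in place — that $n(x) \to \infty$ and that $a_n/n \to \infty$ — the conclusion is a one-line counting estimate, and none of the rearrangement machinery of the preceding sections is needed (the sequence $A$ is already increasing). The only points requiring minor care are that the limit $n/a_n \to 0$ is taken along $n(x) \to \infty$ as $x \to \infty$, and that the hypothesis $f(x)/x \to \infty$, stated for all real $x \geq 1$, is applied at the integer arguments $n$, which is immediate.
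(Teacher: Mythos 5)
Your proof is correct and follows essentially the same route as the paper: bound $\mathcal{A}(0,x)$ by the index $n$ with $a_n \leq x < a_{n+1}$ and use $a_n/n \to \infty$ (the paper just compares $x$ with $f(n)/2$ instead of with $a_n$ to get the same estimate). The only cosmetic point is that $\mathcal{A}(0,x) = n(x)$ should strictly be $\mathcal{A}(0,x) \leq n(x)$ if any early terms of $A$ were negative, which does not affect the bound.
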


\begin{proof}
There is an integer $N^*$ such that $a_n > f(n)/2$ for all $n \geq N^*.$  
For every $\varepsilon > 0$ there is an integer $N_0(\varepsilon) \geq N^*$ such that
$
f(n)/n > 2/\varepsilon
$
for all $n \geq N_0(\varepsilon)$.  Let $x \geq f(N_0(\varepsilon))/2.$  There 
is a unique integer $n \geq N_0(\varepsilon)$ such that
\[
\frac{f(n)}{2} \leq x < \frac{f(n+1)}{2} < a_{n+1}.
\]
It follows that $\mathcal{A}(0,x) \leq n$ and so
\[
\frac{\mathcal{A}(0,x)}{x} \leq \frac{n}{x} \leq \frac{2n}{f(n)} < \varepsilon.
\]
Therefore,
\[
d(\mathcal{A}) = \lim_{x\rightarrow\infty} \frac{\mathcal{A}(0,x)}{x} = 0.
\]
This completes the proof.
\end{proof}

\begin{lemma}   \label{SeqR:lemma:FindSubsequence}
Let $ \mathcal{C} $ be a set of nonnegative integers of asymptotic density 1.
Let $g$ be  a growth function such that $\lim_{x\rightarrow\infty}g(x)/x = \infty.$    
There exists a sequence of integers $\{c_n\}_{n=1}^{\infty}$ 
such that $c_n \sim g(n)$ and $c_n \in  \mathcal{C} $ for all $n \in \N.$
\end{lemma}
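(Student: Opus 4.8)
The plan is to build the subsequence $\{c_n\}$ greedily, taking for $c_n$ a carefully chosen element of $\mathcal{C}$ lying close to $g(n)$, and to show that the density-$1$ hypothesis forces such an element always to exist within a window that is $o(g(n))$ wide. First I would fix a parameter sequence $\varepsilon_k \downarrow 0$ and, for each $k$, use the facts that $d(\mathcal{C}) = 1$ and $g(x)/x \to \infty$ to produce a threshold $X_k$ beyond which two things hold simultaneously: (i) $\mathcal{C}(0,x)/x > 1 - \varepsilon_k$ for all $x \geq X_k$, so the complement $\mathbf{N}_0 \setminus \mathcal{C}$ has at most $\varepsilon_k x$ elements up to $x$; and (ii) $g(x+1) - g(x)$ is large enough (since $g$ is increasing and superlinear) that the interval $[g(n), g(n+1))$ has length exceeding, say, $2\varepsilon_k g(n+1)$ for all $n$ with $g(n) \geq X_k$. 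Then any interval of the form $[g(n), g(n+1))$ contains more integers than the number of non-$\mathcal{C}$ integers below $g(n+1)$ that could possibly... — more precisely, I would instead argue directly on a single interval $I_n = \bigl[(1-\delta_n) g(n),\, (1+\delta_n) g(n)\bigr]$ for a suitable $\delta_n \to 0$: the number of integers in $I_n$ is about $2\delta_n g(n)$, while the number of integers in $I_n$ lying outside $\mathcal{C}$ is at most $\varepsilon_k \cdot (1+\delta_n) g(n)$, which is strictly smaller once $\varepsilon_k < \delta_n$ and $n$ is large; hence $I_n \cap \mathcal{C} \neq \emptyset$, and we may pick $c_n$ there.

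The second step is to arrange that the chosen elements are distinct and, if we want, increasing, and that the asymptotic relation $c_n \sim g(n)$ genuinely holds. For the asymptotics: by construction $|c_n - g(n)| \leq \delta_n g(n)$ with $\delta_n \to 0$, so $c_n/g(n) \to 1$ immediately. For distinctness one can note that once $n$ is large the windows $I_n$ are pairwise disjoint (because $g(n+1) - g(n) \to \infty$ dominates $2\delta_n g(n)$ if $\delta_n$ is chosen to decay, e.g. $\delta_n = 1/\sqrt{g(n)}$ up to the superlinearity bound), so distinct $n$ give distinct $c_n$; the finitely many small $n$ can be adjusted by hand or simply chosen to be any distinct elements of $\mathcal{C}$ (which is infinite since $d(\mathcal{C}) = 1$). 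If an increasing subsequence is desired, disjointness of the windows plus $c_n \in I_n$ already delivers $c_n < c_{n+1}$ for large $n$.

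The technical heart — and the step I expect to be the main obstacle — is choosing the window half-width $\delta_n$ correctly: it must shrink to $0$ (to force $c_n \sim g(n)$) yet stay large enough that $2\delta_n g(n)$ (the count of integers in the window) eventually dominates $\varepsilon_k (1+\delta_n) g(n)$ (the worst-case count of integers missing from $\mathcal{C}$), and also large enough that $\delta_n g(n) \to \infty$ so the window actually contains an integer at all. Since $\mathcal{C}(0,x)/x \to 1$ only gives $\varepsilon_k$ that I control but tied to a threshold $X_k$ that may grow fast, I would interleave the two parameter sequences: define $\varepsilon_k = 1/k$ and, having $X_k$, set $\delta_n = 2/k$ for all $n$ in the range $g(n) \in [X_k, X_{k+1})$, say. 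Then on that range $2\delta_n g(n) = (4/k) g(n)$ beats $\varepsilon_k (1+\delta_n) g(n) \leq (2/k) g(n)$, and $\delta_n g(n) \geq (2/k) X_k \to \infty$. This bookkeeping, together with checking the edge cases for small $n$, is routine but must be set up carefully; once it is in place the existence of $c_n \in \mathcal{C} \cap I_n$ and hence the conclusion $c_n \sim g(n)$ follow at once.
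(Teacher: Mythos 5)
Your core argument for the statement as written is correct and is essentially the paper's: both proofs surround $g(n)$ by a shrinking window $\left[(1-\delta)g(n),(1+\delta)g(n)\right]$, use the density-$1$ hypothesis to show that the window must meet $\mathcal{C}$ once $n$ is large, and interleave the tolerance parameter with the threshold beyond which the density estimate applies; your comparison of the integer count of the window with the count of integers missing from $\mathcal{C}$ is the same estimate as the paper's contradiction argument giving $\mathcal{C}\left((1-1/t)g(n),(1+1/t)g(n)\right)\geq g(n)/t$ for $n\geq N_t$.

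However, your side claim about distinctness and monotonicity is not correct, and that is precisely the feature of the sequence $\{c_n\}$ that the paper needs later. The windows $I_n$ are not eventually pairwise disjoint: $g(x)/x\to\infty$ does not imply $g(n+1)-g(n)\to\infty$ (a growth function can be nearly flat on long stretches and rise steeply only occasionally), and with your actual choice $\delta_n=2/k$ held constant throughout the range $g(n)\in[X_k,X_{k+1})$, the window width $(4/k)\,g(n)$ typically dwarfs $g(n+1)-g(n)$ (already for $g(x)=x^2$), so consecutive windows overlap heavily and the greedy choice could repeat values or fail to increase. The paper's proof avoids this by establishing the stronger count that for $n\geq N_t$ the window contains at least $g(n)/t$ elements of $\mathcal{C}$, with $N_t$ also chosen so that $g(n)/n>t$; hence the $n$-th window contains more than $n$ elements of $\mathcal{C}$, and the pigeonhole principle allows one to pick $c_n$ distinct from $c_1,\ldots,c_{n-1}$. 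Distinctness is not demanded by the statement you were asked to prove, but it is used in Theorem~\ref{SeqR:theorem:FundamentalSuper}, where the merged sequence $B'$ must have pairwise distinct terms in order to admit a strictly increasing rearrangement; so either drop the disjointness claim and add the paper's counting-plus-pigeonhole step, or strengthen your window estimate so that the $n$-th window is guaranteed to contain more than $n$ elements of $\mathcal{C}$.
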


\begin{proof}
We begin by showing that for every positive integer $t$  there is an integer $N_t$ such that 
\beq  \label{SeqR:GoodIneq}
\mathcal{C}\left( \left(1-\frac{1}{t} \right) g(n),  \left(1+ \frac{1}{t} \right) g(n) \right)
  \geq \frac{g(n)}{t}
\qquad\text{for all $n \geq N_t$,}
\eeq
where $\mathcal{C}(y,x)$ is the counting function of the set $\mathcal{C}.$
If not, then for some $t$ there are infinitely many integers $n$ for which 
\beq  \label{SeqR:BadIneq}
\mathcal{C}\left( \left(1-\frac{1}{t} \right) g(n),  \left(1+ \frac{1}{t} \right) g(n) \right) < \frac{g(n)}{t}.
\eeq
If $n$ satisfies inequality~\eqref{SeqR:BadIneq}, then  
\[
\mathcal{C}\left( 0, \left(1+ \frac{1}{t} \right)g(n)\right) <  g(n) +1.
\]
It follows that
\[
d_L( \mathcal{C} ) = \liminf_{n\rightarrow\infty} \frac{ \mathcal{C}(0,n)}{n} \leq \frac{t}{t+1}.
\]
which contradicts the fact that the  set $ {\mathcal{C}} $ has asymptotic density 1.    
Therefore, inequality~\eqref{SeqR:GoodIneq} holds for all $t\geq 1.$  
Since $g(n)/n$ tends to infinity, we can also choose the positive integers $N_t$ so that 
\[
\frac{g(n)}{n} > t \qquad\text{for all $n \geq N_t$}
\]
and
\[
N_t < N_{t+1} \qquad\text{for all $t \geq 1$.}
\]
If $n \geq N_t,$ then the interval 
\[
\left[ \left(1-\frac{1}{t} \right) g(n),  \left(1+ \frac{1}{t} \right) g(n) \right] 
\]
contains at least $g(n)/t > n$ elements of the set ${\mathcal{C}}$.
In particular, for $t=1,$ the interval $\left[ 0, 2 g(N_1) \right]$ 
contains more than $N_1$ elements of $ \mathcal{C} $.  
We choose distinct positive integers $c_1,\ldots,c_{N_1}$ in the set $ \mathcal{C}  \cap \left[ 0, 2 g(N_1) \right]$.

Let $n' >N_1$ and suppose that we have constructed a finite sequence 
of pairwise distinct integers $\{c_n\}_{n=1}^{n'-1}$ such that 
\[
c_n \in  \mathcal{C}  \cap \left[ \left(1-\frac{1}{t} \right) g(n),  \left(1+ \frac{1}{t} \right) g(n) \right]
\]
for all integers $n$ such that  $N_1 \leq n < n'$ and $N_t \leq n < N_{t+1}.$  Choose the positive integer $t'$ so that $N_{t'}\leq n' < N_{t'+1}.$  
Since the interval
\[
\left[ \left(1-\frac{1}{t'} \right) g(n'),  \left(1+ \frac{1}{t'} \right) g(n') \right] 
\]
contains at least $g(n')/t' \geq n'$ elements of ${\mathcal{C}}$, 
the pigeon hole principle implies that we can choose an integer $c_{n'}$ in this interval 
such that $c_n \neq c_{n'}$ for all integers $n < n'.$
It follows by induction that there is an infinite sequence $\{c_n\}_{n=1}^{\infty}$ of pairwise distinct integers such that 
\[
c_n \in  \mathcal{C}  \cap \left[ \left(1-\frac{1}{t} \right) g(n),  \left(1+ \frac{1}{t} \right) g(n) \right]
\]
for all integers $n \geq N_t$.  Equivalently,
\[
\left(1-\frac{1}{t} \right) g(n) \leq c_n \leq  \left(1+ \frac{1}{t} \right) g(n)
\]
for all $n \geq N_t,$  and so the sequence $\{c_n\}_{n=1}^{\infty}$ satisfies the asymptotic relation $c_n \sim g(n)$.  
This completes the proof.
\end{proof}

The following result about supersequences will be used to describe the additive spectrum.

\begin{theorem}   \label{SeqR:theorem:FundamentalSuper}
Let $f$ be a growth function, and let $A = \{a_k\}_{k=1}^{\infty}$ 
be a strictly increasing sequence of integers such that $A \sim f$. 
Let $g$ be an asymptotically stable growth function such that 
$\lim_{x\rightarrow\infty} g(x)/x = \infty.$  
If $g(x) \leq f(x)$ and $g^{-1}f(x+1) - g^{-1}f(x) \geq 1$ for all $x\geq 1,$ 
then there exists a strictly increasing sequence of positive integers  $B = \{b_n\}_{n=1}^{\infty}$ such that 
$B \sim g$ and $B$ is a supersequence of $A$.
\end{theorem}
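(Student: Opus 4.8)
\emph{Plan.} The idea is to build $B$ by threading the terms of $A$ into a sequence that grows like $g$, inserting between them just enough extra integers. We first record some consequences of the hypotheses. Since $g(x)\le f(x)$ and $g(x)/x\to\infty$, also $f(x)/x\to\infty$, so by Lemma~\ref{SeqR:lemma:smallA} the set $\mathcal{A}=\{a_k:k\ge 1\}$ has density $0$; moreover $g^{-1}\ge f^{-1}$ and $g^{-1}(y)/y\to 0$. Write $h=g^{-1}\circ f$. Then $h$ is a growth function, $g\bigl(h(x)\bigr)=f(x)$, and the hypotheses say precisely that $h(x)\ge x$, that $h(x+1)\ge h(x)+1$ (so also $\lceil h(x+1)\rceil\ge\lceil h(x)\rceil+1$), and that $h(x)=o\bigl(f(x)\bigr)$. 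The role of $h$ is this: a term $a_k\sim f(k)$ can occur as the $m$-th term of a sequence asymptotic to $g$ only if $g(m)\sim a_k\sim f(k)=g\bigl(h(k)\bigr)$, which, $g$ being asymptotically stable, forces $m=h(k)\bigl(1+o(1)\bigr)$.

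\emph{Construction.} Start from the reference sequence $c_n=\max\{c_{n-1}+1,\ \lceil g(n)\rceil\}$ for $n\ge 1$, with $c_0:=0$. A one-line induction on $d_n:=c_n-n$ gives $d_n\le\max\{d_{n-1},\ g(n)+1-n\}$, hence $g(n)\le c_n\le g(n)+n$, and therefore $c_n\sim g(n)$ because $g(n)/n\to\infty$; also $\{c_n\}$ is a strictly increasing sequence of positive integers. Now put $m_k=\lceil h(k)\rceil$; by the property of $h$ this is a strictly increasing sequence of positive integers with $m_k\sim h(k)$, and $a_k\sim f(k)=g\bigl(h(k)\bigr)\sim g(m_k)\sim c_{m_k}$, the middle $\sim$ by monotonicity and asymptotic stability of $g$ since $\lceil h(k)\rceil-h(k)\in[0,1)$. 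Modify $\{c_n\}$ into $\{b_n\}$ by declaring $b_{m_k}=a_k$ for each $k$ and, on each block of intermediate positions $m_k<n<m_{k+1}$, choosing $b_n$ to be an integer of the open interval $(a_k,a_{k+1})$ as close to $g(n)$ as possible while keeping $\{b_n\}$ strictly increasing (for instance $b_n=\max\{b_{n-1}+1,\lceil g(n)\rceil\}$, decreased if necessary so the remaining positions of the block still fit below $a_{k+1}$). This is possible provided
\[
m_{k+1}-m_k\ \le\ a_{k+1}-a_k\qquad\text{for all large }k
\]
(the block has $m_{k+1}-m_k-1$ positions and $(a_k,a_{k+1})$ contains $a_{k+1}-a_k-1$ integers), with minor adjustments to finitely many initial terms. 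Granting this, $B=\{b_n\}$ is a strictly increasing sequence of positive integers containing every $a_k$, i.e.\ a supersequence of $A$.

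\emph{Asymptotics, and the main obstacle.} It remains to show $B\sim g$, i.e.\ $a_n\sim a_{\sigma(n)}$. On the terms $b_{m_k}=a_k$ this holds because $a_k\sim g(m_k)$, as noted. On a block, $b_n$ lies between $a_k\sim g(m_k)$ and $a_{k+1}\sim g(m_{k+1})$, hence between two values bracketing $g(n)$; comparing $b_n$ with the reference value $c_n\sim g(n)$, and using $|a_k-c_{m_k}|=o(c_{m_k})$ together with the bound $c_n\le g(n)+n$ (so $n=o(g(n))$ is negligible), one checks $b_n=g(n)\bigl(1+o(1)\bigr)$; the positions where a decrease was forced occur only where $g$ varies slowly over the block, and there the same squeezing applies. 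The genuine difficulty is the \emph{Construction} step: producing positions $m_k$ that are simultaneously $\sim h(k)$ and satisfy $m_{k+1}-m_k\le a_{k+1}-a_k$ (and, if $m_k=\lceil h(k)\rceil$ does not itself work, perturbing the positions while keeping $m_k\sim h(k)$). This is exactly where the hypothesis $g^{-1}f(x+1)-g^{-1}f(x)\ge 1$ is indispensable: it forces the target positions $h(k)$ of consecutive terms of $A$ to be at least one apart, which --- combined with $a_k\sim f(k)$ and $h=o(f)$ --- prevents $A$ from clustering so densely near some value that the positions strictly between two consecutive terms $a_k,a_{k+1}$ cannot be filled by integers strictly between $a_k$ and $a_{k+1}$. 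Making this quantitative --- bounding $\lceil h(k+1)\rceil-\lceil h(k)\rceil$ against $a_{k+1}-a_k$ for all large $k$ using the asymptotic stability of $g$ and the relation $A\sim f$ --- is the technical heart of the proof, and I expect it to be the main obstacle.
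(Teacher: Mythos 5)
Your construction has a genuine gap, and it is exactly the one you flag yourself: the step requiring $m_{k+1}-m_k\le a_{k+1}-a_k$ for all large $k$ is not a technical nuisance to be patched later --- it is false in general, even in the motivating case. Take $f(x)=\alpha x^h$, $g(x)=\beta x^h$ with $(\alpha/\beta)^{1/h}=5$, say, so $m_{k+1}-m_k\approx 5$ for all $k$. A strictly increasing integer sequence with $a_k\sim\alpha k^h$ may nevertheless have $a_{k+1}=a_k+1$ for infinitely many $k$ (put $a_k$ near $\alpha k^h$ generically, but along a sparse set of indices $k_j$ set $a_{k_j+1}=a_{k_j}+1$; since $(k_j/(k_j+1))^h\to 1$, the relation $a_k\sim\alpha k^h$ survives). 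For such $k$ the block of $m_{k+1}-m_k-1\approx 4$ intermediate positions cannot be filled by integers strictly between $a_k$ and $a_{k+1}$, so the rigid plan ``place $a_k$ at position $m_k=\lceil g^{-1}f(k)\rceil$'' cannot be carried out, and the hypothesis $g^{-1}f(x+1)-g^{-1}f(x)\ge 1$ gives no lower bound on the gaps $a_{k+1}-a_k$ (it only forces the \emph{target positions} apart, which is the wrong direction for your counting). Allowing ``perturbed'' positions $m_k$ is the right instinct, but you give no construction of them, so the proof is incomplete at its load-bearing step.

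The paper's proof avoids this bookkeeping entirely by not insisting that $a_k$ sit at any prescribed position of a monotone sequence. It first builds a possibly \emph{non-monotone} sequence $B'$: put $b'_{n_k}=a_k$ at $n_k=[g^{-1}f(k)]$ (strictly increasing by the hypothesis on $g^{-1}f$, and $g(n_k)\sim f(k)$ by asymptotic stability of $g$ --- the same observations you make), and at all other positions put $b'_n=c_n$, where $\{c_n\}$ is a sequence with $c_n\sim g(n)$ chosen inside the complement $\mathcal{C}=\N_0\setminus\mathcal{A}$; such a choice exists by Lemma~\ref{SeqR:lemma:FindSubsequence} because $\mathcal{A}$ has density $0$ (Lemma~\ref{SeqR:lemma:smallA}) so $\mathcal{C}$ has density $1$, and taking the fillers from $\mathcal{C}$ makes all terms of $B'$ pairwise distinct. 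Then $B'\sim g$, and one sorts: the order-inducing rearrangement $B=B'_\sigma$ is strictly increasing, still contains $A$ as a subsequence, and still satisfies $B\sim g$ by the tauberian rearrangement theorem (Theorem~\ref{SeqR:theorem:RearrangeOrder}). In the problematic example above, sorting simply places $a_k$ and $a_{k+1}$ at adjacent ranks rather than five apart, and the tauberian theorem absorbs that drift. If you want to salvage your approach, you essentially need this sorting-plus-tauberian mechanism (or an equivalent argument that rank displacements of size $o(n)$ do not affect $\sim g$); as written, the block-filling inequality your construction rests on cannot be proved because it is not true.
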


\begin{proof}
We begin by proving that there is a  strictly increasing sequence  
of positive integers 
$\{n_k\}_{k=1}^{\infty}$ such that $g(n_k) \sim f(k).$  
Since $g(x) \leq f(x)$ for $x \geq 1$ and $g$ is continuous, it follows that 
\[
[g(1),\infty) \supseteq [f(1),\infty)
\]
and so the range of $g$ contains the range of $f$.  
For every positive integer $k$ we define the real number $t_k = g^{-1}f(k)$ 
and the positive integer  
$
n_k = [t_k] = t_k.
$
The inequality $g(x) \leq f(x)$ implies that $x \leq g^{-1}f(x),$ 
and so $t_k \geq n_k \geq k.$ 
Since
\[
t_{k+1} - t_k =  g^{-1}f(k+1) -  g^{-1}f(k) \geq 1
\]
it follows that
\[
n_{k+1} > t_{k+1} - 1 \geq t_k \geq n_k
\]
and so $\{n_k\}_{k=1}^{\infty}$ is a strictly increasing sequence of positive integers.    
Since $g(x-1) \sim g(x)$ and $g(t_k -1) < g(n_k) \leq g(t_k),$ 
it follows that $g(n_k) \sim g(t_k) = f(k).$ 

Let $ {\mathcal{A}}  = \{a_k : k = 1,2,\ldots \}.$
Since $g(x) \leq f(x)$ and $\lim_{x\rightarrow\infty} g(x)/x = \infty,$
it follows that $\lim_{x\rightarrow\infty} f(x)/x = \infty.$
By Lemma~\ref{SeqR:lemma:smallA}, the set $\mathcal{A}$ has asymptotic density 0, 
and so the set $\mathcal{C} = \N_0 \setminus \mathcal{A}$ has asymptotic density 1.  
By Lemma~\ref{SeqR:lemma:FindSubsequence}, the set ${\mathcal{C}}$ contains a subsequence $\{c_n\}_{n=1}^{\infty}$ 
such that $c_n \sim g(n).$  Define the sequence $B' = \{b'_n\}_{n=1}^{\infty}$ by
\[
b'_n = \begin{cases}
c_n & \text{if $n \neq n_k$ for all $k\in \N$} \\
a_k & \text{if $n = n_k.$}
\end{cases}
\]
The elements of the sequence $B'$ are pairwise distinct because the sets 
${\mathcal{A}}$ and ${\mathcal{C}}$ are disjoint.  Moreover,
\[
\lim_{\substack{n\rightarrow\infty \\ n \neq n_k}} \frac{b'_n}{g(n)}
= \lim_{\substack{n\rightarrow\infty \\ n \neq n_k}} \frac{c_n}{g(n)} = 1
\]
and
\[
\lim_{k\rightarrow\infty}  \frac{b'_{n_k}}{g(n_k)} 
= \lim_{k\rightarrow\infty} \frac{a_k}{f(k)}\frac{f(k)}{g(n_k)} = 1.
\]
Thus, $B' \sim g.$  
However, the terms of the sequence $B'$ are not necessarily strictly increasing.  
Choose a permutation $\sigma \in S(\N)$ such that the rearranged sequence
$B = B'_{\sigma}$ is strictly increasing.  By Theorem~\ref{SeqR:theorem:RearrangeOrder}, 
we have $B \sim g.$  This completes the proof.
\end{proof}

\section{Approximating powers of 3 by powers of 2, and other asymptotic impossibilities}

For every real number $x$, let $\langle x \rangle = x - [x]$ 
denote the \emph{fractional part} of $x$.

In this section we show that the supersequence theorem 
(Theorem~\ref{SeqR:theorem:FundamentalSuper}) is false 
if we omit the condition that the growth function $g$ is asymptotically stable.
We begin with arithmetically interesting special case of $f(x) = 3^x$ and $g(x) = 2^x.$
The following result is equivalent to the statement  
that it is impossible to approximate powers of 3 by powers of 2.  
The proof uses the fact that if $\vartheta$ is an irrational number, 
then the sequence of fractional parts $\{ \langle k\vartheta \rangle \}_{k=1}^{\infty}$  
is dense in the interval $(0,1).$

\begin{theorem}  \label{SeqR:theorem:PowersOf3and2}
Let $A = \{a_k\}_{k=1}^{\infty}$ be a strictly increasing 
sequence of integers such that $A \sim 3^x.$ 
There does not exist a strictly increasing sequence of integers 
$B = \{b_n\}_{n=1}^{\infty}$ such that $B \sim 2^x$ 
and $B$ is a supersequence of $A$.
\end{theorem}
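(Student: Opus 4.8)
The plan is to argue by contradiction. Suppose such a supersequence $B = \{b_n\}_{n=1}^{\infty}$ exists, with $B \sim 2^x$ and $b_{n_k} = a_k$ for a strictly increasing sequence $\{n_k\}_{k=1}^{\infty}$ of positive integers. Since $a_k \sim 3^k$ and $b_{n_k} = a_k \sim 2^{n_k}$, we get $2^{n_k} \sim 3^k$, hence $n_k \log 2 - k \log 3 \to 0$, i.e. $n_k = k\,\vartheta + o(1)$ where $\vartheta = \log 3 / \log 2$ is irrational. The strategy is to exploit that $\{n_k\}$ must be strictly increasing while simultaneously being forced to track the irrational multiples $k\vartheta$, and to derive a contradiction from how the remaining terms of $B$ are distributed between consecutive $a_k$'s.

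The key quantitative step: because $B \sim 2^x$ and $B$ is strictly increasing, the counting function satisfies $B(0,x) \sim \log_2 x$, so the number of terms of $B$ in an interval $[2^m, 2^{m+1}]$ is $1 + o(1)$ — informally, $B$ has essentially one element per ``binary scale''. Now the subsequence $\{a_k\}$ with $a_k \sim 3^k$ sits inside $B$ at positions $n_k \sim k\vartheta$, and between $a_k$ and $a_{k+1}$ there are exactly $n_{k+1} - n_k - 1$ other terms of $B$. Since $a_{k+1}/a_k \to 3$, the interval $(a_k, a_{k+1})$ spans roughly $\log_2 3 = \vartheta$ binary scales, so it should contain about $\vartheta - 1 \approx 0.585$ terms of $B$ beyond $a_k$ — but $n_{k+1} - n_k - 1$ is a nonnegative integer. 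The contradiction will come from equidistribution: I would write $n_{k+1} - n_k = \lfloor (k+1)\vartheta \rfloor - \lfloor k\vartheta \rfloor + (\text{bounded correction from the }o(1))$, and the three-distance / equidistribution theorem forces $n_{k+1}-n_k$ to take the value $1$ infinitely often AND the value $\geq 2$ infinitely often, with prescribed asymptotic frequencies governed by $\langle k\vartheta\rangle$. More precisely, I would locate a value $k$ for which $\langle k\vartheta \rangle$ is very close to $1$ (possible by density of $\{\langle k\vartheta\rangle\}$ in $(0,1)$), forcing $\lfloor (k+1)\vartheta\rfloor - \lfloor k\vartheta\rfloor = 2$, and show that this, combined with the sharp asymptotics $b_{n_k} \sim 2^{n_k}$ and $b_{n_k+1} \sim 2^{n_k+1}$ and $b_{n_{k+1}} \sim 2^{n_{k+1}}$, is incompatible with $a_k \sim 3^k$, $a_{k+1}\sim 3^{k+1}$ forcing the gap to span exactly $\log_2 3$ scales.

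Concretely: from $2^{n_k} \sim 3^k$ and $2^{n_{k+1}} \sim 3^{k+1}$ we get $2^{n_{k+1} - n_k} \sim 3$, so $n_{k+1} - n_k \to \vartheta$ in the sense that $2^{n_{k+1}-n_k} \to 3$; but $n_{k+1}-n_k$ is an integer, and $2^m \neq 3$ for any integer $m$, while $2 < 3 < 4$ means $n_{k+1}-n_k \in \{1,2\}$ for all large $k$ and $2^{n_{k+1}-n_k}$ cannot converge to $3$ unless it oscillates — so $n_{k+1}-n_k$ equals $1$ for infinitely many $k$ and equals $2$ for infinitely many $k$. Along a subsequence where $n_{k+1}-n_k = 1$ we get $a_{k+1}/a_k = b_{n_k+1}/b_{n_k} \to 2$, contradicting $a_{k+1}/a_k \to 3$. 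This last contradiction is the heart of the matter, and it does not even need the full equidistribution input — only that $n_{k+1}-n_k$ cannot be eventually constant (since $3$ is not a power of $2$) — so the density statement about $\{\langle k\vartheta\rangle\}$ is used just to guarantee the value $1$ is attained infinitely often, which follows because if $n_{k+1}-n_k \geq 2$ for all large $k$ then $n_k \geq (2-o(1))k$, contradicting $n_k \sim k\vartheta$ with $\vartheta < 2$. I expect the main obstacle to be purely bookkeeping: carefully converting the $o(1)$ slack in $n_k \log 2 = k\log 3 + o(1)$ into the clean statement ``$n_{k+1}-n_k = 1$ infinitely often,'' and ruling out the degenerate possibility that $\{n_k\}$ fails to be strictly increasing or that the $o(1)$ terms conspire — but all of these are controlled by the hypothesis that $A$ and $B$ are strictly increasing integer sequences with the stated sharp asymptotics.
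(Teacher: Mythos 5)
Your final paragraph is a correct and complete proof, and it goes by a genuinely different (and more elementary) route than the paper. Both arguments begin identically: from $b_{n_k}=a_k$, $A\sim 3^x$, $B\sim 2^x$ one gets $2^{n_k}/3^k\to 1$. The paper then passes to fractional parts: writing $\vartheta=\log 3/\log 2$, it shows $\langle k\vartheta\rangle$ would have to lie in $(0,1/4)\cup(3/4,1)$ for all large $k$, contradicting the density of $\{\langle k\vartheta\rangle\}_{k=1}^{\infty}$ in $(0,1)$, which rests on the irrationality of $\vartheta$. You instead compare consecutive terms: $2^{n_{k+1}-n_k}\to 3$ while $n_k/k\to\vartheta<2$, so (the gaps $n_{k+1}-n_k$ being positive integers) the gap must equal $1$ infinitely often, and along that subsequence $a_{k+1}/a_k=b_{n_k+1}/b_{n_k}\to 2$, contradicting $a_{k+1}/a_k\to 3$. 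This needs no equidistribution or density input at all --- only $2<3<4$, i.e.\ that $3$ is not an integral power of $2$ --- and the same gap argument would also give the ``only if'' half of the paper's more general theorem on $u^x$ versus $v^x$ without its rational/irrational case split. (Your argument can even be shortened: $2^{n_{k+1}-n_k}\to 3$ is already absurd, since a sequence of integral powers of $2$ stays at distance at least $1$ from $3$.) What the paper's formulation buys is the explicit Diophantine statement that $\langle k\vartheta\rangle$ would have to avoid the middle of the unit interval, in keeping with the equidistribution theme of that section. One editorial remark: the middle of your proposal (the counting-function heuristic and the appeal to the three-distance theorem) is unnecessary scaffolding and is stated loosely (e.g.\ ``cannot converge to $3$ unless it oscillates''); the proof you actually need is entirely contained in your last paragraph.
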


\begin{proof}
Suppose that $B = \{b_n\}_{n=1}^{\infty}$ is a supersequence of $A$ such that $B \sim 2^x.$  
Then there is a strictly increasing sequence of integers $\{n_k\}_{k=1}^{\infty}$ such that
\[
b_{n_k} = a_k
\]
for all $k \geq 1$.  It  follows that
\[
\lim_{k\rightarrow\infty} \frac{2^{n_k}}{3^k} = 
 \lim_{k\rightarrow\infty} \frac{2^{n_k}}{b_{n_k}}\frac{a_k}{3^k}=1
\]
and so, for every $\varepsilon$ with $0 < \varepsilon < 1/2$,   
there exists an integer $K(\varepsilon)$ such that 
\[
(1-\varepsilon)3^k < 2^{n_k} < (1+\varepsilon)3^k
\]
for all $k \geq K(\varepsilon).$  Taking logarithms, we obtain
\[
-1 < -\frac{\log(1+\varepsilon)}{\log 2} < k\left(\frac{\log 3}{\log 2}\right) - n_k 
< \frac{\log(1-\varepsilon)^{-1}}{\log 2} < 1
\]
and so the fractional part of $ k\log 3/\log 2$ satisfies 
\[
\left\langle  k\left(\frac{\log 3}{\log 2}\right) \right\rangle \in \left( 0, \frac{\log(1-\varepsilon)^{-1}}{\log 2}\right) 
\cup \left( 1 - \frac{\log(1+\varepsilon)}{\log 2}, 1 \right).  
\]
If we choose 
\[
0 < \varepsilon < 1 - 2^{-1/4}
\]
then the fractional part of $k\log 3 / \log 2$ satisfies 
\[
\left\langle  k\left(\frac{\log 3}{\log 2}\right) \right\rangle \in \left( 0, \frac{1}{4}\right) 
\cup \left(\frac{3}{4},1 \right)
\]
for all integers $k \geq K(\varepsilon).$  This is impossible, since $\log 3/\log 2$ is irrational and the sequence
$\{ \left\langle k\log 3 / \log 2 \right\rangle  \}_{k=1}^{\infty}$ is dense in $(0,1).$  This completes the proof.
\end{proof}

\begin{theorem}               
Let $u$ and $v$ be positive integers with $u > v.$  
Let $A = \{a_k\}_{k=1}^{\infty}$ be a strictly increasing sequence 
of positive integers such that $A \sim u^x.$  There exists a strictly increasing 
sequence of positive integers $B = \{b_n\}_{n=1}^{\infty}$ 
such that (i) $B \sim v^x$ and (ii) $B$ is a supersequence of $A$ 
if and only if $u = v^r$ for some integer $r \geq 2.$
\end{theorem}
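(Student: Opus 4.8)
The plan is to prove the two implications separately. First dispose of the degenerate case $v=1$: a strictly increasing sequence of positive integers cannot satisfy $b_n\sim 1^n=1$, so no $B$ with property (i) exists, while $u=v^r$ would force $u=1=v$, contradicting $u>v$; thus both sides of the equivalence fail and we may assume $v\ge 2$.

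For the ``if'' direction, suppose $u=v^r$ with $r\ge 2$ an integer, and put $f(x)=u^x$, $g(x)=v^x$, both growth functions with $\lim_{x\to\infty}g(x)/x=\infty$. I would check the remaining hypotheses of Theorem~\ref{SeqR:theorem:FundamentalSuper}: $A\sim f$; $g(x)=v^x\le v^{rx}=u^x=f(x)$ for $x\ge 1$; and, since $g(rx)=v^{rx}=u^x=f(x)$, one has $g^{-1}f(x)=rx$, so $g^{-1}f(x+1)-g^{-1}f(x)=r\ge 1$. The only hypothesis of that theorem that is missing is asymptotic stability of $g$, since $g(x+1)/g(x)=v\neq 1$. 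I would therefore re-run the argument in the proof of Theorem~\ref{SeqR:theorem:FundamentalSuper}, noting that asymptotic stability of $g$ is used there in exactly one spot — to pass from $n_k=[t_k]$, with $t_k=g^{-1}f(k)$, to $g(n_k)\sim f(k)$ — and that this step is trivial here, because $t_k=g^{-1}f(k)=rk$ is already a positive integer, so $n_k=rk$ and $g(n_k)=v^{rk}=u^k=f(k)$ \emph{exactly}. The rest of that proof — density $0$ of $\mathcal{A}=\{a_k:k\ge 1\}$ from Lemma~\ref{SeqR:lemma:smallA}, the choice of $\{c_n\}$ with $c_n\sim g(n)$ inside $\mathcal{C}=\N_0\setminus\mathcal{A}$ from Lemma~\ref{SeqR:lemma:FindSubsequence}, the interleaving $b_n'=c_n$ for $n\neq n_k$ and $b_{n_k}'=a_k$, and the monotone rearrangement licensed by Theorem~\ref{SeqR:theorem:RearrangeOrder} — uses no asymptotic stability, and produces a strictly increasing $B$ with $B\sim v^x$ containing $A$.

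For the ``only if'' direction, suppose such a $B$ exists, with $b_{n_k}=a_k$ for a strictly increasing sequence $\{n_k\}$, so $n_k\to\infty$. From
\[
\frac{v^{n_k}}{u^k}=\frac{v^{n_k}}{b_{n_k}}\cdot\frac{a_k}{u^k}
\]
together with $b_n\sim v^n$ and $a_k\sim u^k$ we get $v^{n_k}/u^k\to 1$; taking logarithms and writing $\vartheta=\log u/\log v>1$ gives $n_k-k\vartheta\to 0$, so the distance from $k\vartheta$ to the nearest integer tends to $0$ as $k\to\infty$. It remains to show $\vartheta$ is an integer, for then $u=v^{\vartheta}$ with $\vartheta\ge 2$ (since $u>v\ge 2$). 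If $\vartheta$ were irrational, then $\{\langle k\vartheta\rangle\}_{k=1}^{\infty}$ is dense in $(0,1)$ — the same fact used in Theorem~\ref{SeqR:theorem:PowersOf3and2} — so $\langle k\vartheta\rangle\in(1/3,2/3)$ for infinitely many $k$, forcing the distance to the nearest integer to exceed $1/3$ infinitely often, a contradiction. If $\vartheta=p/q$ in lowest terms with $q\ge 2$, then since $\gcd(p,q)=1$ the congruence $kp\equiv 1\pmod q$ has infinitely many solutions $k$, and for each of them $\langle k\vartheta\rangle=1/q$, so the distance to the nearest integer equals $1/q>0$ infinitely often, again a contradiction. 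Hence $\vartheta\in\mathbf{Z}$, and the proof is complete.

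I expect the crux to be the ``only if'' direction, and within it the rational non-integer case $\vartheta=p/q$ with $q\ge 2$: the irrational case is essentially a repetition of Theorem~\ref{SeqR:theorem:PowersOf3and2}, but it is the elimination of denominators $q\ge 2$ that upgrades the conclusion from ``$\log u/\log v\in\Q$'' to ``$u=v^r$,'' and this needs the coprimality of $p$ and $q$ to make $\langle k\vartheta\rangle$ attain the value $1/q$ along an arithmetic progression of $k$. On the constructive side the only real care required is confirming that asymptotic stability of $g$ is never used essentially in the proof of Theorem~\ref{SeqR:theorem:FundamentalSuper} beyond the rounding step that is vacuous here.
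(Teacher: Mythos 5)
Your proof is correct, and the ``only if'' half is essentially the paper's own argument: the paper also splits on whether $\log u/\log v$ is irrational (density of the fractional parts $\langle k\log u/\log v\rangle$, as in Theorem~\ref{SeqR:theorem:PowersOf3and2}) or rational with denominator $s>1$, where it picks $\ell$ with $\ell r\equiv 1\pmod{s}$ and passes to the subsequence $k=js+\ell$ to force the fractional part to be $1/s$ --- exactly your choice of $k$ with $kp\equiv 1\pmod q$, just parametrized differently; your reformulation via ``distance from $k\vartheta$ to the nearest integer tends to $0$'' is a clean repackaging of the same inequalities. Where you genuinely diverge is the ``if'' direction. The paper disposes of it in one line, ``let $b_n=v^n$ and $n_k=rk$,'' which literally yields a supersequence of the given $A$ only when $a_k=u^k$ exactly, whereas the theorem allows any strictly increasing $A$ with $A\sim u^x$; your route --- rerunning the proof of Theorem~\ref{SeqR:theorem:FundamentalSuper} with $f(x)=u^x$, $g(x)=v^x$, and observing that asymptotic stability of $g$ enters only in the rounding step $g([t_k])\sim g(t_k)$, which is vacuous because $t_k=g^{-1}f(k)=rk$ is already an integer so $g(n_k)=f(k)$ exactly --- handles general $A$ and effectively repairs (or at least completes) the paper's terse construction, at the cost of re-inspecting that proof rather than citing it. Your separate treatment of $v=1$ is also a sensible addition, since the logarithmic manipulations in the ``only if'' direction presuppose $\log v>0$, a case the paper passes over silently.
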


\begin{proof}
If $u = v^r,$ then we simply let $b_n = v^n$ and $n_k = rk$ for all $k \geq 1.$

If there exists a sequence $B = \{b_n\}_{n=1}^{\infty}$ such that $B \sim v^x$ 
and $B$ is a supersequence of $A$, then there is a strictly increasing sequence of integers
$\{n_k\}_{k=1}^{\infty}$ such that $b_{n_k} = a_k$ for all $k \in \N.$  
It follows that 
\[
\lim_{k\rightarrow\infty} \frac{v^{n_k}}{u^k} 
= \lim_{k\rightarrow\infty} \frac{v^{n_k}}{b_{n_k}}\frac{a_k}{u^k} = 1.
\] 
Suppose that $u \neq v^r$ for all integers $r \geq 2.$ 
There are two cases.  In the first case, $\log u/\log v$ is rational.  
Then there exist relatively prime positive integers $r$ and $s$ such that $1 < s < r$ and
$\log u/\log v = r/s,$ or, equivalently, $u^s = v^r.$   
Choose an integer $\ell$ such that $1 \leq \ell \leq s-1$ and $\ell r \equiv 1 \pmod{s}.$
Let $m_{j} = n_{j s + \ell}$ for all positive integers $j.$
Then
\[
\lim_{j\rightarrow\infty} \frac{v^{m_{j}}}{u^{j s + \ell}} = 1.
\]
It follows that for every $\varepsilon > 0$ there is an integer $J(\varepsilon)$ such that 
\[
(1 - \varepsilon) u^{j s + \ell} < v^{m_{j}} < (1 + \varepsilon) u^{j s + \ell}
\]
for all $j \geq J(\varepsilon).$  Taking logarithms and rewriting the inequalities, we obtain
\[
-\frac{\log (1 + \varepsilon)}{\log v} 
< \frac{( j s + \ell ) \log u}{\log v} - m_{j} 
< \frac{\log (1 - \varepsilon)^{-1}}{\log v}.
\]
Choosing $0 < \varepsilon < 1 - v^{-1/s},$ we obtain  
\[
0 < \frac{\log(1+\varepsilon)}{\log v} < \frac{\log(1-\varepsilon)^{-1}}{\log v} < \frac{1}{s}
\]
Since $\log u/\log v = r/s$ and $\ell r \equiv 1 \pmod{s}$, 
it follows that for every integer $j \geq J(\varepsilon)$ there is an integer $w_{j}$ such that 
\[
-\frac{1}{s} < -\frac{\log (1 + \varepsilon)}{\log v} 
< \frac{1}{s} + w_{j} 
< \frac{\log (1 - \varepsilon)^{-1}}{\log v} < \frac{1}{s}.
\]
This is impossible for $s > 1.$

In the second case, $\log u/\log v$ is irrational, the sequence of fractional parts
$\{ \langle k\log u/\log v\rangle\}_{k=1}^{\infty}$ is dense in the interval $(0,1),$ 
and the argument proceeds as in Theorem~\ref{SeqR:theorem:PowersOf3and2}.
This completes the proof.  
\end{proof}

The function $g$ has \emph{exponential growth} if $\liminf_{x\rightarrow\infty} g(x+\delta)/g(x) > 1$ 
for some $\delta > 0.$  For $c>0,$ the exponential function $g(x) = e^{cx}$ 
satisfies $g(x+\delta)/g(x) = e^{c\delta} > 1$ for all $\delta > 0.$  

We need the following simple interpolation result.

\begin{lemma}  \label{SeqR:lemma:SuperFunction}
Let $g$ be a growth function and let $\{ \lambda_k \}_{k=1}^{\infty}$ be a strictly increasing sequence 
of real numbers such that $\lambda_k > g(k)$ for all $k \geq 1.$  There exists a growth function
$f$ such that $f(x) > g(x)$ for all $x \geq 1$ and $f(k) = \lambda_k$ for all $k \geq 1.$
\end{lemma}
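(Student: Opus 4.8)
The plan is to construct $f$ piecewise, interpolating linearly between the prescribed data points. First I would set $f(k) = \lambda_k$ for each positive integer $k$, and on each interval $[k, k+1]$ define $f$ to be the straight line joining $(k,\lambda_k)$ to $(k+1,\lambda_{k+1})$; explicitly, for $k \leq x \leq k+1$ set $f(x) = \lambda_k + (\lambda_{k+1} - \lambda_k)(x - k)$. This $f$ is automatically continuous (the one-sided limits agree at each integer since both equal $\lambda_k$), and it is strictly increasing because each linear piece has positive slope $\lambda_{k+1} - \lambda_k > 0$. Since $\lambda_k \to \infty$ (it is strictly increasing, and $\lambda_k > g(k) \to \infty$), the function $f$ is unbounded. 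So $f$ is a growth function.

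The remaining point is the strict inequality $f(x) > g(x)$ for all $x \geq 1$. Here I would use convexity of $f$ on each subinterval — more precisely, the fact that a linear function on $[k,k+1]$ agreeing with given values at the endpoints lies above any function there dominated at the endpoints, provided that function is convex, which $g$ need not be. So that naive argument fails, and this is the one place requiring care. The fix is to exploit that we have slack at the endpoints: $f(k) = \lambda_k > g(k)$ and $f(k+1) = \lambda_{k+1} > g(k+1)$, but $g$ could bulge upward in the interior of $[k,k+1]$. To handle this I would not use the linear interpolant directly but instead first replace $\{\lambda_k\}$ by a more generous sequence, or — cleaner — build $f$ from both the data and $g$ itself. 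Concretely, define on $[k,k+1]$
\[
f(x) = \max\bigl\{ L_k(x),\, g(x) + \varepsilon_k(x) \bigr\}
\]
is awkward to keep strictly increasing; a tidier route is: let $M_k = \max_{k \leq x \leq k+1} g(x)$ (finite by continuity), note $M_k < \lambda_{k+1}$ need not hold, so instead define $f$ at half-integers or simply note the real obstruction.

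The genuinely clean approach, which I would adopt, is this: set $f(x) = g(x) + h(x)$ where $h$ is a continuous positive function to be chosen so that $f$ is strictly increasing and hits the prescribed values. At $x = k$ we need $h(k) = \lambda_k - g(k) =: \mu_k > 0$. Between consecutive integers, choose $h$ to interpolate the values $\mu_k$ linearly — but now the issue is that $f = g + h$ strictly increasing requires controlling $g' $, which we do not have. So instead I would argue as follows, avoiding derivatives entirely: on $[k,k+1]$ let $m_k = \max\{g(x) : k \leq x \leq k+1\}$, and observe that by passing to a subsequence argument this can be beaten — actually the honest fix is to first prove we may assume $\lambda_{k+1} > m_k$ for all $k$ by the following reduction. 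Given the original strictly increasing sequence $\{\lambda_k\}$ with $\lambda_k > g(k)$, it need not satisfy $\lambda_{k+1} > m_k$. The hard part will be reconciling the prescribed values (which we are forced to hit exactly) with the possibility that $g$ overshoots them locally; but since $g(k+1) < \lambda_{k+1}$ and $g$ is continuous, $m_k = \max_{[k,k+1]} g$ is attained and $m_k$ could exceed $\lambda_{k+1}$ only if $g$ rises steeply then we'd have $g(k+1) \geq$ that interior max — contradiction unless the max is at an endpoint. Wait: the maximum of $g$ on $[k,k+1]$ is at an endpoint only if $g$ is monotone there, which it is, since $g$ is a growth function hence increasing. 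Therefore $m_k = g(k+1) < \lambda_{k+1} = f(k+1)$, and likewise $g(x) \leq g(k+1) < f(k+1)$ for $x \in [k,k+1]$; but we need $g(x) < f(x)$, and $f(x) \geq f(k) = \lambda_k$ while $g(x) \leq g(k+1)$, so it suffices that $\lambda_k \geq g(k+1)$, which may fail. The clean conclusion: since $g$ is increasing, for $x \in [k,k+1]$ we have $g(x) \leq g(k+1)$; and $f$ on $[k,k+1]$ is the line from $\lambda_k$ to $\lambda_{k+1}$, so $f(x) \geq \min(\lambda_k,\lambda_{k+1}) = \lambda_k$. If $\lambda_k \geq g(k+1)$ we are done on that interval; otherwise use the line from $(k+1,\lambda_{k+1})$ backward: $f(x) = \lambda_{k+1} - (\lambda_{k+1}-\lambda_k)(k+1-x) > g(x)$ iff $(\lambda_{k+1} - g(x)) > (\lambda_{k+1} - \lambda_k)(k+1-x)$; since $\lambda_{k+1} - g(x) \geq \lambda_{k+1} - g(k+1) > 0$ and the right side is at most $\lambda_{k+1}-\lambda_k$, comparison at $x=k$ gives $\lambda_k - g(k) > 0$ which holds, and the function $x \mapsto (\lambda_{k+1}-g(x)) - (\lambda_{k+1}-\lambda_k)(k+1-x)$ is positive at both endpoints ($\mu_{k+1} > 0$ and $\mu_k > 0$) — but it need not be positive in between since $g$ is not concave. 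Thus the honest main obstacle is exactly this non-concavity, and the correct resolution is: redefine $f$ on $[k,k+1]$ to be $\max\{\ell_k(x), g(x) + \delta_k\}$ for a tiny constant $\delta_k > 0$, then smooth; I would instead present the slick version — first enlarge $\lambda_k$. The reduction: given $g$ and $\{\lambda_k\}$, define $\lambda_1' = \lambda_1$ and inductively $\lambda_{k+1}' = \max\{\lambda_{k+1}', \lambda_k' + 1\}$ — no. Final plan I would write: since $g$ is increasing and continuous, put $\lambda_k^* = \max\{\lambda_k,\ g(k) + (\lambda_k - g(k))\} = \lambda_k$, i.e. do not change the prescribed values but instead take $f$ to be the upper envelope: $f(x) := \sup\{g(x),\ \text{linear interpolant of } (\lambda_k)\}$ is not enough. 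Honestly, the right statement to prove and write is: choose $f(x) = g(\lceil x \rceil + 1) + (\text{linear interpolant of } \lambda_k - \text{shifted}) $ — I will, in the actual proof, simply define $f$ on $[k,k+1]$ as the line through $(k,\lambda_k)$ and $(k+1,\lambda_{k+1})$ after first replacing the given sequence by $\lambda_k^{**}$ defined so that $\lambda_{k+1}^{**} > \max_{[k,k+1]} g = g(k+1)$ is automatic and $\lambda_k^{**} > g(k+1)$ as well — achieved by interleaving, i.e. noting we are free to prove the lemma by first establishing it for sequences growing fast enough and then observing the general case reduces to that since we may always pass to $\tilde\lambda_k = \lambda_k$ and handle slow growth by the piecewise-linear-from-the-right bound. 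In short: the construction is elementary piecewise-linear interpolation; the single nontrivial verification is $f > g$ pointwise, handled by monotonicity of $g$ on each unit interval together with $f(k),f(k+1) > g(k),g(k+1)$ and, where needed, a local use of $f \geq \lambda_k \geq g(k+1) \geq g(x)$ — and for the residual case one shrinks the interval or inserts an extra breakpoint at the location of concern. I expect the write-up to be short, with this pointwise inequality the only step meriting more than a line.
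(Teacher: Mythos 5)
Your write-up never actually arrives at a proof: it is a sequence of attempted constructions, most of which you yourself abandon, and the plan you finally commit to (``shrink the interval or insert an extra breakpoint at the location of concern'') is left as a gesture, with no specification of where the breakpoint goes, what value $f$ takes there, or why the resulting function beats $g$ throughout $[k,k+1]$. You correctly isolate the one genuine difficulty --- since $g$ is increasing, the only obstruction is the case $\lambda_k < g(k+1)$, where $g$ rises inside the interval by more than the slack at the left endpoint, so neither chord argument works --- but identifying the obstruction is not the same as resolving it, and the intermediate claim $f \geq \lambda_k \geq g(k+1)$ is, as you admit, exactly what may fail.

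The frustrating part is that you had the paper's proof in hand and discarded it. Midway through you propose taking $f$ on $[k,k+1]$ to be $\max\{\ell_k(x),\, g(x)+\delta_k\}$ with $\ell_k$ the chord and $\delta_k>0$ small, and then set it aside because you thought it needed smoothing or was ``awkward to keep strictly increasing.'' It needs neither: a growth function in this paper is only required to be positive, strictly increasing, continuous, and unbounded, and the maximum of two strictly increasing continuous functions is again strictly increasing and continuous. Taking $\delta_k = \mu := \min\left(\lambda_k - g(k),\, \lambda_{k+1} - g(k+1)\right) > 0$ (the paper's choice), one checks immediately that $g(k)+\mu \leq \lambda_k = \ell_k(k)$ and $g(k+1)+\mu \leq \lambda_{k+1} = \ell_k(k+1)$, so the max still equals $\lambda_k$ and $\lambda_{k+1}$ at the endpoints (hence the pieces glue continuously), while $f(x) \geq g(x)+\mu > g(x)$ on the whole interval. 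That two-line verification is all that was missing. (Your breakpoint idea can also be repaired --- by continuity of $g$ at $k$ choose $\varepsilon>0$ with $g < \lambda_k$ on $[k,k+\varepsilon]$, pick $c$ with $\max(\lambda_k, g(k+1)) < c < \lambda_{k+1}$, and run two chords through $(k+\varepsilon, c)$ --- but as submitted, the argument is incomplete.)
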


\begin{proof}
It suffices to construct $f$ on each interval $[k,k+1].$  We define the number
\[
\mu = \min\left( \lambda_k - g(k), \lambda_{k+1} - g(k+1) \right) > 0
\]
and the strictly increasing, continuous functions 
\[
f_1(x) = g(x)+ \mu
\]
and
\[
f_2(x) = \left( \lambda_{k+1} - \lambda_k \right) (x-k) + \lambda_k.
\]
The function
\[
f(x) = \max(f_1(x),f_2(x))
\]
satisfies the requirements of the Lemma.
\end{proof}

\begin{theorem}
Let $g$ be a growth function such that there is a strictly 
increasing sequence of integers $\{m_k\}_{k=1}^{\infty}$ such that 
\[
\liminf_{k\rightarrow\infty} \frac{g(m_k+1/2)}{g(m_k)} > 1 
\]
and
\[
\liminf_{k\rightarrow\infty} \frac{g(m_k+1)}{g(m_k + 1/2)} > 1.
\]  
There is a growth function $f$ such that $f(x) > g(x)$ for all $x\geq 1$ 
and there is a strictly increasing sequence $A$ of positive integers with $A \sim f$ 
such that there 
does not exist a strictly increasing sequence $B$ of positive integers 
with the properties that $B\sim g$ and $B$ is a supersequence of $A$.
\end{theorem}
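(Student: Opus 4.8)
\emph{Proof proposal.}
The plan is to make the obstruction of Theorem~\ref{SeqR:theorem:PowersOf3and2} quantitatively sharp: the two hypotheses on $g$ are tailored precisely so that the ``target value'' $g(m_k+1/2)$ lies strictly between the values $g(m_k)$ and $g(m_k+1)$ that $g$ takes at consecutive integers, so that $g(n_k)$ cannot be asymptotic to $g(m_k+1/2)$ for any sequence of integers $\{n_k\}$. No equidistribution input is needed here, only the trivial fact that no integer lies strictly between $m_k$ and $m_k+1$.

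First I would record the quantitative data. Since both $\liminf$'s exceed $1$, there are a real number $c>0$ and an index $K_0$ with
\[
g(m_k+1/2) \ge (1+c)\,g(m_k)
\qquad\text{and}\qquad
g(m_k+1) \ge (1+c)\,g(m_k+1/2)
\]
for all $k \ge K_0$. In particular $g(m_{k+1}) \ge g(m_k+1) \ge (1+c)^2 g(m_k)$ for $k \ge K_0$, so $g(m_k) \ge (1+c)^{2(k-K_0)}g(m_{K_0})$ grows at least exponentially in $k$, and hence $k/g(m_k+1/2) \to 0$. Also, $\{m_k\}$ being a strictly increasing sequence of positive integers, $m_k \ge k$, so $g(m_k+1/2) > g(m_k) \ge g(k)$ for all $k$.

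Next I would build $A$ and $f$. Put $a_k = \lceil g(m_k+1/2)\rceil + k$ for $k \ge 1$. Since $g$ is increasing and $m_{k+1} > m_k$, the sequence $\{\lceil g(m_k+1/2)\rceil\}$ is nondecreasing, so $a_{k+1}-a_k \ge 1$ and $\{a_k\}$ is a strictly increasing sequence of positive integers; moreover $a_k > g(m_k+1/2) > g(k)$ for every $k$, and $a_k \sim g(m_k+1/2)$ because $k = o(g(m_k+1/2))$. Applying Lemma~\ref{SeqR:lemma:SuperFunction} with $\lambda_k = a_k$ then produces a growth function $f$ with $f(x) > g(x)$ for all $x \ge 1$ and $f(k) = a_k$ for all $k$; thus $A = \{a_k\}_{k=1}^{\infty}$ is a strictly increasing sequence of positive integers with $A \sim f$.

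Finally I would rule out $B$. Suppose, for contradiction, that $B = \{b_n\}_{n=1}^{\infty}$ is a strictly increasing sequence of positive integers with $B \sim g$ that is a supersequence of $A$, and fix a strictly increasing sequence of positive integers $\{n_k\}$ with $b_{n_k} = a_k$. Then
\[
\lim_{k\to\infty}\frac{g(n_k)}{g(m_k+1/2)} = \lim_{k\to\infty}\frac{g(n_k)}{b_{n_k}}\cdot\frac{a_k}{g(m_k+1/2)} = 1 .
\]
But for each $k \ge K_0$ the integer $n_k$ satisfies either $n_k \le m_k$, in which case $g(n_k)/g(m_k+1/2) \le g(m_k)/g(m_k+1/2) \le 1/(1+c)$, or $n_k \ge m_k+1$, in which case $g(n_k)/g(m_k+1/2) \ge g(m_k+1)/g(m_k+1/2) \ge 1+c$; in neither case can the ratio tend to $1$, a contradiction. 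The only genuinely delicate point is the bookkeeping in the construction of $A$ (making it a strictly increasing integer sequence with $a_k > g(k)$ while keeping $a_k \sim g(m_k+1/2)$), which is what the exponential-growth observation of the second step is for; the concluding trapping argument is short and is exactly the elementary analogue of the fractional-parts step in Theorem~\ref{SeqR:theorem:PowersOf3and2}.
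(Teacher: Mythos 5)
Your proposal is correct and follows essentially the same route as the paper: take $g(m_k+1/2)$ as the target values, invoke Lemma~\ref{SeqR:lemma:SuperFunction} to interpolate a growth function $f$ dominating $g$, and derive the contradiction from the dichotomy $n_k \le m_k$ versus $n_k \ge m_k+1$ against $g(n_k)\sim g(m_k+1/2)$. The only differences are bookkeeping: the paper secures strict monotonicity of $a_k=[f(k)]$ by passing to a subsequence of $\{m_k\}$ with $g(m_{k+1}+1/2)-g(m_k+1/2)\ge 1$, whereas you add $+k$ to the ceiling and justify the asymptotics via the exponential-growth observation — both are fine.
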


\begin{proof}
Since $\lim_{x\rightarrow\infty} g(x) = \infty,$  
by choosing a subsequence of $\{m_k\}_{k=1}^{\infty}$, we can assume 
without loss of generality that 
\beq         \label{SeqR:gm-ineq}
g(m_{k+1}+1/2) - g(m_k + 1/2) \geq 1.
\eeq
The sequence  $\{m_k\}_{k=1}^{\infty}$ is strictly increasing, 
and so $m_k \geq k$ for all $k \geq 1.$
Define the sequence $\{ {\lambda}_k \}_{k=1}^{\infty}$ by
$\lambda_k = g(m_k + 1/2)$ for all $k \geq 1.$  Since the growth function
$g$ is strictly increasing, we have
\[
g(k) \leq g(m_k) < g(m_k + 1/2) = \lambda_k < g(m_{k+1}+1/2) = \lambda_{k+1}.
\]
The sequence $\{\lambda_k\}_{k=1}^{\infty}$  satsifies the conditions of Lemma~\ref{SeqR:lemma:SuperFunction}.
Let $f$ be a growth function such that $f(k) = \lambda_k$ for all integers $k \geq 1$ 
and $f(x) > g(x)$ for all real numbers $x \geq 1.$  

Let $A = \{a_k\}_{k=1}^{\infty}$ be the sequence of integers defined by 
\[
a_k = [f(k)] = [ g(m_k + 1/2)].
\]
Condition~\eqref{SeqR:gm-ineq} implies that $a_k < a_{k+1}$ for all $k\geq 1.$
Moreover, $A \sim f.$  Suppose that $B =  \{b_n\}_{n=1}^{\infty}$ is a strictly 
increasing sequence of positive integers
such that $B \sim g$ and $B$ is a supersequence of $A$.  
Then there exists a strictly increasing sequence $\{n_k\}_{k=1}^{\infty}$ 
of positive integers such that $b_{n_k} = a_k$ for all positive integers $k$.  
This implies that 
\[
\lim_{k\rightarrow\infty} \frac{f(k)}{g(n_k)} = \lim_{k\rightarrow\infty}  \frac{f(k)}{a_k} \frac{b_{n_k}}{g(n_k)} =1.
\]
We shall prove that this is impossible.

Let $\{ n_k \}_{k=1}^{\infty}$ be the strictly increasing sequence of integers  
such that $g(n_k) \sim f(k).$  Either $n_k \leq m_k$ for infinitely many $k$, 
or $n_k \geq m_k + 1$ for infinitely many $k.$
In the first case, 
\[
\liminf_{k\rightarrow\infty} \frac{f(k)}{g(n_k)} \geq  
 \liminf_{k\rightarrow\infty} \frac{g(m_k+1/2)}{g(m_k)} > 1 
\]
which contradicts the asymptotic relation $g(n_k) \sim f(k).$  In the second case, 
\[
\limsup_{k\rightarrow\infty} \frac{f(k)}{g(n_k)} \leq \limsup_{k\rightarrow\infty} \frac{g(m_{k}+1/2)}{g(m_k +1)} < 1
\]
which also contradicts  $g(n_k) \sim f(k).$
This completes the proof.
\end{proof}

\section{The spectrum of bases in additive number theory}
\begin{theorem}    \label{SeqR:theorem:eigenvalue}
Let $h \geq 2.$  
If $\alpha$ is an additive eigenvalue of order $h$ and $0 < \beta < \alpha,$ 
then $\beta$ is also an additive eigenvalue of order $h$.
\end{theorem}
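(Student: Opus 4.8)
The plan is to obtain the statement as a direct application of the supersequence theorem (Theorem~\ref{SeqR:theorem:FundamentalSuper}). Fix an asymptotic basis $\mathcal{A} = \{a_k\}_{k=1}^{\infty}$ of order $h$ realizing $\alpha$, so that $a_k \sim \alpha k^h$ and $h\mathcal{A}$ contains every sufficiently large integer. Since $\beta < \alpha$, a set whose $n$th element is $\sim \beta n^h$ is ``denser'' than $\mathcal{A}$, so one expects to be able to enlarge $\mathcal{A}$ to a strictly increasing sequence $\mathcal{B} = \{b_n\}_{n=1}^{\infty}$ with $b_n \sim \beta n^h$ while keeping $\mathcal{A}$ as a subsequence. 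Once this is done, $\mathcal{A} \subseteq \mathcal{B}$ gives $h\mathcal{A} \subseteq h\mathcal{B}$, so $h\mathcal{B}$ also contains all sufficiently large integers; hence $\mathcal{B}$ is an asymptotic basis of order $h$, and since $b_n \sim \beta n^h$ it realizes $\beta$, proving $\beta \in \mathcal{N}(h)$.

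To carry this out, I would apply Theorem~\ref{SeqR:theorem:FundamentalSuper} with $f(x) = \alpha x^h$ and $g(x) = \beta x^h$. Here $f$ is a growth function with $A \sim f$ by the choice of $\alpha$; $g$ is a growth function, it is asymptotically stable because it is a monomial (polynomials are asymptotically stable), and $\lim_{x\to\infty} g(x)/x = \lim_{x\to\infty} \beta x^{h-1} = \infty$ precisely because $h \geq 2$, which is where that hypothesis is used. The inequality $g(x) \leq f(x)$ for all $x \geq 1$ holds since $0 < \beta < \alpha$. Finally $g^{-1}(y) = (y/\beta)^{1/h}$, so $g^{-1}f(x) = (\alpha/\beta)^{1/h}\,x$ is linear of slope $(\alpha/\beta)^{1/h} > 1$, which gives $g^{-1}f(x+1) - g^{-1}f(x) = (\alpha/\beta)^{1/h} \geq 1$ for all $x \geq 1$. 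Thus all hypotheses of Theorem~\ref{SeqR:theorem:FundamentalSuper} are satisfied, and it produces the required strictly increasing sequence $\mathcal{B}$ with $b_n \sim \beta n^h$ that contains $\mathcal{A}$ as a subsequence.

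I do not anticipate a real obstacle, since the analytic content is already contained in the earlier sections; the points deserving attention are the verification that the two inequalities relating $f$ and $g$ hold \emph{for all} $x \geq 1$ and not merely asymptotically (immediate for these polynomial choices), the explicit use of $h \geq 2$ to force $g(x)/x \to \infty$, and a trivial bookkeeping remark in case $0 \in \mathcal{A}$ — one may simply keep $0$ in $\mathcal{B}$ as well, which alters neither the relation $b_n \sim \beta n^h$ nor the inclusion $h\mathcal{A} \subseteq h\mathcal{B}$.
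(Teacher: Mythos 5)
Your proposal is correct and follows essentially the same route as the paper: apply Theorem~\ref{SeqR:theorem:FundamentalSuper} with $f(x)=\alpha x^h$ and $g(x)=\beta x^h$, verify the hypotheses exactly as you do (asymptotic stability, $g(x)/x\to\infty$ from $h\geq 2$, $g\leq f$ from $\beta<\alpha$, and $g^{-1}f(x+1)-g^{-1}f(x)=(\alpha/\beta)^{1/h}>1$), and conclude that the resulting supersequence $B\supseteq A$ is an asymptotic basis of order $h$ with $b_n\sim\beta n^h$. No gaps; your remark about possibly including $0$ is harmless bookkeeping the paper does not even need to mention.
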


\begin{proof}
Let $f(x) = \alpha x^h$ and $g(x) = \beta x^h.$  Then $f$ and $g$ are 
asymptotically stable growth functions such that $g(x) < f(x)$ for all $x \geq 1,$  
and 
\[
\lim_{x\rightarrow\infty} \frac{g(x)}{x} = \lim_{x\rightarrow\infty} \beta x^{h-1} = \infty.
\]
Moreover, the function 
\[
g^{-1}f(x) = \left( \frac{\alpha}{\beta} \right)^{1/h} x
\]
satisfies the identity
\[
g^{-1}f(x+1) - g^{-1}f(x) = \left( \frac{\alpha}{\beta} \right)^{1/h} > 1
\]
for all $x \geq 1.$

If $\alpha$ is an additive eigenvalue of order $h$, then there is  an asymptotic basis 
 $A = \{a_k\}_{k=1}^{\infty}$  of order $h$ such that $A \sim f,$ that is, $a_k \sim \alpha k^h.$
Applying Theorem~\ref{SeqR:theorem:FundamentalSuper} to the sequence $A$ and the growth functions $f$ and $g$, 
we obtain a supersequence $B = \{b_n\}_{n=1}^{\infty}$ of $A$ such that $B \sim g,$ that is, $b_n \sim \beta n^h.$  
Since $B$ contains $A$, it follows that $B$ is also an asymptotic basis of order $h$, and so $\beta$ is an additive eigenvalue of order $h$.  This completes the proof.
\end{proof}

Theorem~\ref{SeqR:theorem:eigenvalue} immediately implies the following result.

\begin{theorem}
For every $h \geq 2,$ the additive spectrum  $\mathcal{N}(h)$ is an interval 
of the form $(0, \eta_h)$ or $(0, \eta_h]$ with $\eta_h \leq 1/h!.$  
\end{theorem}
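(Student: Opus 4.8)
The plan is to obtain this statement as an immediate corollary of Theorem~\ref{SeqR:theorem:eigenvalue} together with the a priori bound~\eqref{SeqR:MaxEigenvalue}. First I would note that $\mathcal{N}(h) \neq \emptyset$: the construction of Cassels produces, for suitable $\alpha > 0$, an asymptotic basis of order $h$ whose increasing enumeration $\{a_n\}_{n=1}^{\infty}$ satisfies $a_n \sim \alpha n^h$, so $\alpha \in \mathcal{N}(h)$. We may therefore set $\eta_h = \sup \mathcal{N}(h) \in (0,+\infty]$, and since $\mathcal{N}(h)$ is nonempty, $\eta_h > 0$.

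Next I would bound $\eta_h$ from above. If $\alpha \in \mathcal{N}(h)$, choose an asymptotic basis $\mathcal{A}$ of order $h$ with increasing enumeration $a_n \sim \alpha n^h$; inequality~\eqref{SeqR:MaxEigenvalue} then gives
\[
\alpha = \lim_{n\to\infty} \frac{a_n}{n^h} \leq \limsup_{n\to\infty}\frac{a_n}{n^h} \leq \frac{1}{h!}.
\]
Hence every additive eigenvalue of order $h$ is at most $1/h!$, so $\mathcal{N}(h) \subseteq (0, 1/h!]$ and in particular $0 < \eta_h \leq 1/h!$.

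Then I would identify the interval. The inclusion $\mathcal{N}(h) \subseteq (0, \eta_h]$ holds by definition of $\eta_h$. For the reverse inclusion on the open interval, let $\beta$ satisfy $0 < \beta < \eta_h$. By definition of the supremum there is an additive eigenvalue $\alpha \in \mathcal{N}(h)$ with $\beta < \alpha \leq \eta_h$, and since $0 < \beta < \alpha$, Theorem~\ref{SeqR:theorem:eigenvalue} yields $\beta \in \mathcal{N}(h)$. Thus $(0, \eta_h) \subseteq \mathcal{N}(h) \subseteq (0, \eta_h]$, so $\mathcal{N}(h)$ equals $(0, \eta_h)$ if $\eta_h \notin \mathcal{N}(h)$ and equals $(0, \eta_h]$ if $\eta_h \in \mathcal{N}(h)$ --- precisely the asserted dichotomy, with $\eta_h \leq 1/h!$ by the previous paragraph.

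There is essentially no obstacle: all the work is in Theorem~\ref{SeqR:theorem:eigenvalue} (which in turn rests on the supersequence theorem, Theorem~\ref{SeqR:theorem:FundamentalSuper}, and the tauberian theorem for rearrangements, Theorem~\ref{SeqR:theorem:RearrangeOrder}). The only subtlety worth flagging is that the statement is not vacuous precisely because $\mathcal{N}(h)$ is known to be nonempty; and this argument does not decide which of the two forms actually occurs --- whether the supremum $\eta_h$ is itself an additive eigenvalue remains open.
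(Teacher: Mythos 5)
Your proposal is correct and follows exactly the route the paper intends: the paper states that this theorem is an immediate consequence of Theorem~\ref{SeqR:theorem:eigenvalue}, and your argument simply makes that explicit by setting $\eta_h = \sup \mathcal{N}(h)$, invoking Cassels' construction for nonemptiness, the downward-closure from Theorem~\ref{SeqR:theorem:eigenvalue} for the interval structure, and inequality~\eqref{SeqR:MaxEigenvalue} for the bound $\eta_h \leq 1/h!$. No differences worth noting.
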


It is an open problem to compute the number $\eta_h$ and to determine if $\eta_h$ is an additive eigenvalue of order $h$.

\providecommand{\bysame}{\leavevmode\hbox to3em{\hrulefill}\thinspace}
\providecommand{\MR}{\relax\ifhmode\unskip\space\fi MR }
% \MRhref is called by the amsart/book/proc definition of \MR.
\providecommand{\MRhref}[2]{%
  \href{http://www.ams.org/mathscinet-getitem?mr=#1}{#2}
}
\providecommand{\href}[2]{#2}

\end{document}